\documentclass[12pt]{amsart}
\usepackage{hyperref} 
\usepackage{amsmath, amsthm, amssymb}
\usepackage{hyperref} 
\usepackage{enumerate}
\usepackage{verbatim}
\usepackage{caption}
\usepackage{subcaption}
\usepackage{esint}
\usepackage[T1]{fontenc}
\usepackage[doi=false,isbn=false,url=false]{biblatex} 
\addbibresource{references.bib}

\usepackage{tikz,amsthm,amsmath,amstext,amssymb,amscd,epsfig,euscript, mathrsfs,
 dsfont,pspicture,
multicol,graphpap,graphics,graphicx,times,enumerate,
sidecap,
wrapfig,color,pict2e}
\usepackage{setspace}

% 1 inch margins
\addtolength{\oddsidemargin}{-0.875in}
\addtolength{\evensidemargin}{-0.875in}
\addtolength{\textwidth}{1.75in}

\numberwithin{equation}{section}

\title{Quantitative differentiation and the medial axis}
\date{\today}
\author{Guy C. David}
\address{Department of Mathematical Sciences\\ Ball State University, Muncie, IN 47306}
\email{gcdavid@bsu.edu}
\author{Kevin Hook}
\address{Department of Mathematical Sciences\\ Ball State University, Muncie, IN 47306}
\email{kmhook2@bsu.edu}

\thanks{G.~ C.~ David was partially supported by the National Science Foundation under Grant No. DMS-2054004. The results of this paper form part of the 2022 master's thesis of K.~Hook at Ball State University.}
\subjclass[2020]{28A75}

\begin{document}
%\maketitle

%Theorems
\theoremstyle{plain}
\newtheorem{theorem}{Theorem}
\newtheorem{exercise}{Exercise}
\newtheorem{corollary}[theorem]{Corollary}
\newtheorem{scholium}[theorem]{Scholium}
\newtheorem{claim}[theorem]{Claim}
\newtheorem{lemma}[theorem]{Lemma}
\newtheorem{sublemma}[theorem]{Lemma}
\newtheorem{proposition}[theorem]{Proposition}
\newtheorem{conjecture}[theorem]{Conjecture}

\theoremstyle{definition}
\newtheorem{fact}[theorem]{Fact}
\newtheorem{example}[theorem]{Example}
\newtheorem{definition}[theorem]{Definition}
\newtheorem{remark}[theorem]{Remark}
\newtheorem{question}[theorem]{Question}

\numberwithin{equation}{section}
\numberwithin{theorem}{section}

\newcommand{\cG}{\mathcal{G}}
\newcommand{\RR}{\mathbb{R}}
\newcommand{\HH}{\mathcal{H}}
\newcommand{\LIP}{\textnormal{LIP}}
\newcommand{\Lip}{\textnormal{Lip}}
\newcommand{\Tan}{\textnormal{Tan}}
\newcommand{\length}{\textnormal{length}}
\newcommand{\dist}{\textnormal{dist}}
\newcommand{\diam}{\textnormal{diam}}
\newcommand{\vol}{\textnormal{vol}}
\newcommand{\rad}{\textnormal{rad}}
\newcommand{\side}{\textnormal{side}}

%Alphabets
\def\bA{{\mathbb{A}}}
\def\bB{{\mathbb{B}}}
\def\bC{{\mathbb{C}}}
\def\bD{{\mathbb{D}}}
\def\bR{{\mathbb{R}}}
\def\bS{{\mathbb{S}}}
\def\bO{{\mathbb{O}}}
\def\bE{{\mathbb{E}}}
\def\bF{{\mathbb{F}}}
\def\bH{{\mathbb{H}}}
\def\bI{{\mathbb{I}}}
\def\bT{{\mathbb{T}}}
\def\bZ{{\mathbb{Z}}}
\def\bX{{\mathbb{X}}}
\def\bP{{\mathbb{P}}}
\def\bN{{\mathbb{N}}}
\def\bQ{{\mathbb{Q}}}
\def\bK{{\mathbb{K}}}
\def\bG{{\mathbb{G}}}

\def\nrj{{\mathcal{E}}}
\def\cA{{\mathscr{A}}}
\def\cB{{\mathscr{B}}}
\def\cC{{\mathscr{C}}}
\def\cD{{\mathscr{D}}}
\def\cE{{\mathscr{E}}}
\def\cF{{\mathscr{F}}}
\def\cB{{\mathscr{G}}}
\def\cH{{\mathscr{H}}}
\def\cI{{\mathscr{I}}}
\def\cJ{{\mathscr{J}}}
\def\cK{{\mathscr{K}}}
\def\Layer{{\rm Layer}}
\def\cM{{\mathscr{M}}}
\def\cN{{\mathscr{N}}}
\def\cO{{\mathscr{O}}}
\def\cP{{\mathscr{P}}}
\def\cQ{{\mathscr{Q}}}
\def\cR{{\mathscr{R}}}
\def\cS{{\mathscr{S}}}
\def\Up{{\rm Up}}
\def\cU{{\mathscr{U}}}
\def\cV{{\mathscr{V}}}
\def\cW{{\mathscr{W}}}
\def\cX{{\mathscr{X}}}
\def\cY{{\mathscr{Y}}}
\def\cZ{{\mathscr{Z}}}

  \def\del{\partial}
  \def\diam{{\rm diam}}
	\def\VV{{\mathcal{V}}}
	\def\FF{{\mathcal{F}}}
	\def\QQ{{\mathcal{Q}}}
	\def\BB{{\mathcal{B}}}
	\def\XX{{\mathcal{X}}}
	\def\PP{{\mathcal{P}}}

  \def\del{\partial}
  \def\diam{{\rm diam}}
	\def\image{{\rm Image}}
	\def\domain{{\rm Domain}}
  \def\dist{{\rm dist}}
	\newcommand{\Gr}{\mathbf{Gr}}
\newcommand{\md}{\textnormal{md}}
\newcommand{\vspan}{\textnormal{span}}

\begin{abstract}
We study the medial axis of a set $K$ in Euclidean space (the set of points in space with more than one closest point in $K$) from a ``coarse'' and ``quantitative'' perspective. We show that on ``most'' balls $B(x,r)$ in the complement of $K$, the set of almost-closest points to $x$ in $K$ takes up a small angle as seen from $x$. In other words, most locations and scales in the complement of $K$ ``appear'' to fall outside the medial axis if one looks with only a certain finite resolution. The word ``most'' involves a Carleson packing condition, and our bounds are independent of the set $K$.
\end{abstract}

\maketitle

\section{Introduction}

% describe medial axis
% describe measure 0 and further results of erdos, fremlin
% describe our result
% explain carleson set. references.
% state the main theorem - final corollary from the paper

If $K\subseteq \RR^k$, the distance from a point $p\in\RR^k$ to the set $K$ is
$$ d(p,K) = \inf\{d(p,x) : x\in K\},$$
where $d(p,x)$ denotes the Euclidean distance $|p-x|$. If $K$ is closed and $p\in \RR^k$, then there is always a point $x\in K$ such that $d(p,x)=d(p,K)$, but this point $x$ may not be unique.

The set of points $p\in\mathbb{R}^k$ for which this closest point $x\in K$ is \textit{not} unique is called the \textit{medial axis} of $K$, which we denote $\text{Med}(K)$:
\begin{definition}
Given $K\subseteq \RR^k$, let
\begin{equation*}
    \text{Med}(K)=\{p\in \mathbb{R}^k: \text{ there exist $x,y\in K$ with $x\neq y$ and $d(p,K)=d(p,x)=d(p,y)$\}}.
\end{equation*}
\end{definition}
The medial axis has a fairly long history in both pure and applied mathematics; the results of Erd\"os \cite{erdos} appear before even the name ``medial axis'' was coined by Blum \cite{Blum}. A good overview from the pure mathematical perspective can be found in the introduction and references of \cite{hajlasz}.

For one thing, it is well-known that the medial axis of any closed set $K$ has measure zero. A short proof of this fact can be given by applying Rademacher's theorem on the differentiability of Lipschitz functions to the Lipschitz function $x\mapsto\text{dist}(x,K)$. See \cite{mathoverflow} or \cite[Remark 13]{hajlasz} for details. In fact, much stronger results than this can be proven on the smallness of the medial axis: see \cite{erdos, Fremlin}.

In this paper, we prove that the medial axis is small from a ``coarse'' or ``quantitative'' perspective. In essence, our main result (Theorem \ref{thm:main} below) says that given a compact set $K\subseteq\RR^k$, the set of locations and scales in the complement of $K$ that ``appear'' to be in the medial axis is ``small'', with a control which is independent of the set $K$.  The words ``small'' and ``appear'' need some further elaboration.

To measure the size of a collection of locations and scales in $\mathbb{R}^k$, we use the notion of a Carleson set:
\begin{definition}\label{c-set r^k intro}
Let $D\subseteq \mathbb{R}^k\times \mathbb{R}^+$ be measurable. Let $D_r=\{x:(x,r)\in D\}.$ We say that $D$ is a \textit{Carleson set} if there is a $C\geq 0$ such that for every $L>0$ and every ball $B\subseteq \mathbb{R}^k$ of radius $L$,
$$\int_0^L|D_r\cap B|\frac{dr}{r}\leq C|B|,$$
where $|\cdot|$ denotes the Lebesgue measure of a set in $\RR^k$. We call the minimal $C$ for which this is satisfied the \textit{Carleson constant} of $D$.
\end{definition}

This definition plays a major role in the area of quantitative geometric measure theory developed by David and Semmes \cite{davidsemmes}. Roughly speaking, if one thinks of $D\subseteq \mathbb{R}^k\times \mathbb{R}^+$ as a collection of balls in $\mathbb{R}^k$ (centers and radii), the Carleson condition says that this is a ``small'' collection: most points of $\RR^k$ are not contained in too many balls of $D$ of very different radii. In particular, if $D$ is Carleson then every ball in $\RR^k$ contains a ball of comparable size lying outside the collection $D$. A nice discussion of this concept is given by Semmes in \cite[B.29]{gromov}.

Our main theorem considers the set of all balls $B(x,r)$ in the complement of a given set $K\subseteq\RR^k$. Some of these balls may have the property that there are two points $z_1$ and $z_2$ in $K$ such that
\begin{itemize}
    \item $z_1$ and $z_2$  both ``almost'' minimize the distance to $x$ in $K$, i.e.,
    $$ d(x,z_i) \leq d(x,K) + \epsilon r $$
    for some small $\epsilon>0$, and
    \item the angle between the segments $[x,z_1]$ and $[x,z_2]$ is ``large'', i.e., bounded away from zero in some quantitative way. 
\end{itemize}
We think of a ball satisfying these conditions as representing a point that ``appears'' to be in the medial axis if one looks with only some finite degree of resolution.

Our main theorem says that such balls are rare: they form a Carleson set. Moreover, the Carleson constant $C$ is completely independent of the set $K$, depending only on the dimension and the chosen parameters.

\begin{theorem}\label{thm:main}
Let $K\subseteq \mathbb{R}^k$ and let $\epsilon\geq 0$ and $\delta>0$ satisfy $2\delta+\epsilon<1$. Let
\begin{align*}
G=& \{(x,r):x\in \mathbb{R}^k, 0<r<d(x,K),\text{ and there exist }z_1,z_2\in K \text{ such that }\\
&d(x,z_1),d(x,z_2)\leq d(x,K)+\epsilon r\\& \text{ but }|\theta|>\cos^{-1}\left(2\left(\frac{1-(2\delta +\epsilon)}{1+2\delta}\right)^2-1\right)\}
\end{align*}
where $\theta$ is the angle between $[x,z_1]$ and $[x,z_2]$. 
Then $G$ is a Carleson set whose Carleson constant can be bounded above depending only on $\delta$ and $k$. 
\end{theorem}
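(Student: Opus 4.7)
The plan is to reduce the statement to a quantitative differentiation estimate for the $1$-Lipschitz function $f:\RR^k\to\RR$ defined by $f(y)=d(y,K)$. Introducing the ``flatness'' functional
$$
\alpha_f(x,r)=\inf_{L}\frac{1}{r}\sup_{y\in B(x,r)}|f(y)-L(y)|,
$$
where $L$ ranges over affine functions $\RR^k\to\RR$, I would split the proof into two stages: (A) a purely geometric inclusion $G\subseteq\{(x,r):\alpha_f(x,r)>\delta\}$, hitting exactly the threshold in the statement; and (B) a Carleson bound for this superlevel set, coming from a Dorronsoro-style quantitative differentiation inequality for Lipschitz functions on $\RR^k$.

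For step (A) I would argue by contrapositive. Assume $L$ is an affine function with $\sup_{B(x,r)}|f-L|\leq\delta r$, and set $v_i=(z_i-x)/|z_i-x|$. Compare $f$ and $L$ at the test points $y_i=x+rv_i$, which lie in $\overline{B(x,r)}$ since $r<d(x,K)\leq|z_i-x|$. The almost-closest condition forces $f(y_i)\leq|y_i-z_i|=|z_i-x|-r\leq f(x)+(\epsilon-1)r$, and pairing with $L(y_i)=L(x)+r\langle\nabla L,v_i\rangle$ through the $\delta r$-fit at both $x$ and $y_i$ yields $\langle\nabla L,v_i\rangle\leq 2\delta+\epsilon-1$. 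Summing over $i=1,2$ and using $|v_1+v_2|=2\cos(\theta/2)$ together with Cauchy-Schwarz gives $|\nabla L|\cos(\theta/2)\geq 1-(2\delta+\epsilon)$. A separate, elementary triangle-inequality argument, using that $f$ is $1$-Lipschitz and $|L-f|\leq\delta r$ on $B(x,r)$, produces the a priori bound $|\nabla L|\leq 1+2\delta$. Combining these gives exactly
$$
\cos(\theta/2)\geq\frac{1-(2\delta+\epsilon)}{1+2\delta},
$$
contradicting $(x,r)\in G$ and so proving $\alpha_f(x,r)>\delta$ on $G$.

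For step (B) I would invoke (or prove from scratch) a Dorronsoro-type $L^\infty$-flatness estimate: for every $1$-Lipschitz $f:\RR^k\to\RR$ and every ball $B_0\subseteq\RR^k$ of radius $L$,
$$
\int_{B_0}\int_0^L \alpha_f(x,r)^2\,\frac{dr}{r}\,dx\leq C_k|B_0|.
$$
Chebyshev's inequality then immediately upgrades this to the Carleson bound on $\{\alpha_f>\delta\}$ with constant at most $C_k/\delta^2$, depending only on $k$ and $\delta$ and in particular independent of $K$, which is what the theorem asks for. The main obstacle I anticipate is step (B): pinning down a Dorronsoro-style bound in this $L^\infty$ form, uniform in $K$ and local in $B_0$, is the real multi-scale analytic content and essentially requires a Littlewood-Paley / Plancherel argument applied to $d(\cdot,K)$. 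Step (A), by contrast, is elementary linear algebra, where the only real subtlety is choosing the test points at the boundary of $B(x,r)$ so that the two bounds on $\langle\nabla L,v_i\rangle$ and on $|\nabla L|$ combine to give precisely the numerator $1-(2\delta+\epsilon)$ and denominator $1+2\delta$ appearing in the statement.
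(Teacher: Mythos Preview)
Your proposal is correct and follows essentially the same route as the paper: Step~(A) is exactly the paper's Theorem~\ref{ z_1,z_2 close together e} (same test points $y_i=x+rv_i$ on the sphere, same two-sided estimate producing $1-(2\delta+\epsilon)$ and the operator-norm bound $1+2\delta$, with your half-angle formula $|v_1+v_2|=2\cos(\theta/2)$ equivalent to the paper's $\sqrt{(1+\cos\theta)/2}$), and Step~(B) is what the paper packages as Theorem~\ref{final qd intro}, which it cites as a known Dorronsoro-type result rather than proving from scratch. The only cosmetic differences are your use of gradient/Cauchy--Schwarz language in place of the paper's linear-map formulation, and your explicit mention of Chebyshev to pass from the $L^2$ Dorronsoro inequality to the Carleson superlevel-set bound.
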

Note that the quantity $\cos^{-1}\left(2\left(\frac{1-(2\delta +\epsilon)}{1+2\delta}\right)^2-1\right)$ is positive if $\delta>0$ and tends to $0$ as $\delta,\epsilon\rightarrow 0$. The theorem is already of interest in the case $\epsilon=0$, in which case it more directly concerns the medial axis of $K$.

Thus, Theorem \ref{thm:main} gives a precise sense in which every medial axis is seen only at a small collection of locations and scales, in a way which is robust and independent of the base set $K$.

The proof of Theorem \ref{thm:main} uses the philosophy of the proof mentioned above that the medial axis has measure zero, which applies Rademacher's theorem to the distance function to $K$ \cite[Remark 13]{hajlasz}. However, instead of using Rademacher's theorem, which provides only infinitesimal and not ``coarse'' information, we use a result from the theory of ``quantitative differentiation'', explained in Section \ref{sec:background}. In Section \ref{sec:proof} we apply this theorem along with some quantitative estimates on the distance function to prove Theorem \ref{thm:main}.

To conclude the introduction, we emphasize that Theorem \ref{thm:main} is not implied by the fact that the medial axis has measure zero, nor even by the stronger results of \cite{erdos,Fremlin} mentioned earlier, because those results make no statements about the ``large scale'' structure of the medial axis.

\section{Background on quantitative differentiation}\label{sec:background}
The theory of ``quantitative differentiation'' considers approximation by Lipschitz functions at ``large'' scales, not just infinitesimal ones. It originates in work of Dorronsoro \cite{dorronsoro} and Jones \cite{Jones}, with extensions by many others since. Good overviews of the material we need can be found in Appendix B by Semmes in \cite{gromov} (especially Section B.29), the notes of Young \cite{young}, or the master's thesis of the second named author \cite{hook}.

With the dimension $k$ understood from context, let $B(x,r)$ denote the closed ball of radius $r$ centered at $x\in\RR^k$. We use $\mathbb{R}^+$ below to denote the positive real numbers.

\begin{definition}\label{ ecd r^k}
Let $f:\mathbb{R}^k\rightarrow \mathbb{R} \text{ and } \epsilon >0$. We say that  $f$ is $\epsilon$-coarsely-differentiable on $B(x,r)$ if there is an affine function $\lambda:\mathbb{R}^k\rightarrow \mathbb{R}$ such that $$|f(p)-\lambda(p)|\leq \epsilon r \text { for all } p\in B(x,r).$$
\end{definition}

The main result we need is that every $1$-Lipschitz function is coarsely differentiable on all balls outside of a Carleson set. 

\begin{theorem}\label{final qd intro}
Let $\epsilon>0$. Let $f:\mathbb{R}^k\rightarrow \mathbb{R}$ be 1-Lipschitz. Let
$$G=\{(x,r)\in\RR^k\times\RR^+: f \text { is not $\epsilon$-coarsely differentiable on } B(x,r)\}.$$
Then $G$ is Carleson, and its Carleson constant can be bounded above depending only on $\epsilon$ and $k$.
\end{theorem}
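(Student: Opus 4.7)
The plan is to reduce Theorem \ref{final qd intro} to a quantitative Dorronsoro-type square-function estimate. For each ball $B(x,r)$, let $\lambda_{x,r}$ be the affine function minimizing $\frac{1}{|B(x,r)|} \int_{B(x,r)} |f(p) - \lambda(p)|^2 \, dp$ over all affine $\lambda \colon \RR^k \to \RR$, and set
$$\gamma(x,r)^2 := \frac{1}{|B(x,r)|\, r^2} \int_{B(x,r)} |f(p) - \lambda_{x,r}(p)|^2 \, dp.$$
Since $f$ is $1$-Lipschitz, $\gamma(x,r) \leq 1$ everywhere, and a direct computation shows that the best affine approximation $\lambda_{x,r}$ is itself $C(k)$-Lipschitz. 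A routine Lipschitz-vs.-$L^2$ comparison (using that if $|f - \lambda_{x,2r}|$ is large at some point of $B(x,r)$, then Lipschitz continuity forces it to be large on a ball whose size is controlled by this maximum) then shows that whenever $\gamma(x,2r)$ is smaller than an explicit constant depending only on $k$ and $\epsilon$, one has $|f - \lambda_{x,2r}| \leq \epsilon r$ on $B(x,r)$, so $f$ is $\epsilon$-coarsely differentiable there. It therefore suffices to prove that for each $\eta > 0$ the set $\{(x,r) \colon \gamma(x,2r) > \eta\}$ is Carleson with constant depending only on $\eta$ and $k$.

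For this I would aim for the quadratic Carleson bound
$$\int_0^L \int_{B(x_0, L)} \gamma(x,r)^2 \, dx \, \frac{dr}{r} \;\leq\; C(k) \, L^k \qquad \text{for every ball } B(x_0, L)\subseteq\RR^k,$$
which is the classical Dorronsoro square-function estimate applied to $f$. Its proof rests on the $L^2$-Pythagorean identity for the best-affine projections on nested balls $B(x, r/2) \subseteq B(x, r)$: one obtains a monotonicity inequality relating $\gamma(x,r)$ and $\gamma(x, r/2)$ with an error term measuring the change in the gradient of the best affine approximation, telescopes this across dyadic scales, and uses the uniform bound $|\nabla \lambda_{x,r}| \lesssim 1$ coming from the 1-Lipschitz hypothesis to sum the errors. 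The main obstacle is precisely this step: while the Pythagorean identity itself is routine, implementing it cleanly in the continuous $dr/r$ setting of Definition \ref{c-set r^k intro} and tracking the dimensional constants takes care. An expedient alternative in the write-up would be to quote Dorronsoro's theorem directly from \cite{dorronsoro}, whose formulations in the expository sources \cite{gromov, young, hook} are essentially the estimate displayed above.

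With the quadratic Carleson bound in hand, Theorem \ref{final qd intro} follows immediately by Chebyshev's inequality. The $dx \, dr/r$-mass of the set $\{(x,r) \colon \gamma(x,2r) > \eta\}$ inside $B(x_0, L) \times [0, L]$ is bounded by $\eta^{-2}$ times the left-hand side of the displayed inequality, hence by $C(k)\, \eta^{-2}\, L^k$. Taking $\eta$ to be the constant from the first paragraph (which depends only on $\epsilon$ and $k$) yields precisely the Carleson condition of Definition \ref{c-set r^k intro}, with constant depending only on $\epsilon$ and $k$, as required.
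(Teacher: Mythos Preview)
The paper does not actually prove Theorem \ref{final qd intro}; it simply quotes it as a known background result, with attribution to Dorronsoro \cite{dorronsoro} and pointers to expositions in \cite[B.29]{gromov}, \cite{young}, and \cite{hook}. So there is no ``paper's own proof'' to compare against.

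Your proposal goes further than the paper does: you sketch the standard Dorronsoro route (best-affine $L^2$ coefficients $\gamma(x,r)$, an $L^\infty$-to-$L^2$ comparison using Lipschitz regularity, then the quadratic Carleson/square-function estimate, then Chebyshev). This is exactly the strategy behind the references the paper cites, and each of the three reductions you describe is correct. The only caveat is the one you yourself flag: the heart of the matter is the square-function bound $\int_0^L\int_B \gamma(x,r)^2\,dx\,\tfrac{dr}{r}\leq C(k)L^k$, and your outline of its proof (Pythagorean/monotonicity identity plus telescoping over scales) is schematic rather than complete. If you intend to fill this in rather than cite it, the cleanest versions in the literature work on dyadic cubes first and then pass to balls, which avoids the continuous-$dr/r$ bookkeeping you mention; see \cite{young} or \cite{hook} for models. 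But as a proof plan your proposal is sound and, in fact, is precisely the content the paper is importing by citation.
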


As Semmes remarks in \cite[B.29]{gromov}, it is difficult to trace the attribution of this precise result. It follows from the main results of \cite{dorronsoro}, as discussed in \cite{davidsemmes}. It is stated explicitly as \cite[Theorem B.29.10]{gromov} and as \cite[Theorem 2.4]{young}. An exposition of the proof (following that of Young \cite{young}) is given in \cite{hook}, and a generalization to metric space targets is given in \cite{AzzamSchul}. Further generalizations and analogs are discussed in \cite{cheeger}.

\section{Proof of the main theorem}\label{sec:proof}
The main work in the proof of Theorem \ref{thm:main} lies in the following result.

\begin{theorem}\label{ z_1,z_2 close together e}
Let $K\subseteq \mathbb{R}^k$ be compact. Let $f:\mathbb{R}^k\rightarrow \mathbb{R}$ be $f(x)=d(x,K)$. Let $\epsilon\geq 0$ and $\delta>0$ be such that $2\delta+\epsilon<1$. Let $x\in \mathbb{R}^k$ be such that there are $z_1,z_2\in K$, where
\begin{equation}\label{d(x,z_1)< d(x,K)+er}
    d(x,z_1)\leq d(x,K)+\epsilon r,
\end{equation}
$$d(x,z_2)\leq d(x,K)+\epsilon r, $$
and $0<r<d(x,K)$.
Suppose that $f$ is $\delta$-coarsely differentiable on $B(x,r)$.  Let $\theta$ be the angle between $[x,z_1]$ and $[x,z_2]$. Then
$$|\theta|\leq \cos^{-1}\left(2\left(\frac{1-(2\delta+\epsilon)}{1+2\delta}\right)^2-1\right).$$
\end{theorem}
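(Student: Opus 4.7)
The plan is to exploit the affine approximation $\lambda$ guaranteed by $\delta$-coarse differentiability and to extract from it an ``approximate gradient direction'' which must be close to both $v_1 := (z_1-x)/|z_1-x|$ and $v_2 := (z_2-x)/|z_2-x|$. The angle $\theta$ between $v_1$ and $v_2$ will then be at most twice the angle each $v_i$ makes with this direction.

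First, I would set up two-sided control on $f$ along the segment from $x$ toward $z_i$. For $t\in[0,r]$ and $p_t = x + tv_i$, since $t < r < d(x,K) \leq |z_i-x|$, the point $p_t$ lies on the segment $[x,z_i]$, so $f(p_t)\le d(p_t,z_i) = d(x,z_i)-t \le d(x,K)+\epsilon r - t$. The triangle inequality gives the matching lower bound $f(p_t)\ge d(x,K)-t$. Combining with $|f-\lambda|\le \delta r$ on $B(x,r)$ at the two endpoints $t=0$ and $t=r$, and writing $\lambda(p)=\lambda(x)+\langle L,p-x\rangle$, subtraction of the bounds on $\lambda(x)$ from the bounds on $\lambda(x+rv_i)$ yields
\[
\langle L, v_i\rangle \;\le\; -1+2\delta+\epsilon \qquad (i=1,2).
\]

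Next, I would bound $|L|$ using the fact that $f$ is $1$-Lipschitz: for any unit vector $u$, the triangle inequality $|\lambda(x+ru)-\lambda(x)|\le 2\delta r + |f(x+ru)-f(x)| \le (1+2\delta)r$ gives $|L|\le 1+2\delta$. Setting $u = -L/|L|$ (and noting $L\ne 0$ since $\langle L,v_i\rangle \le -(1-2\delta-\epsilon)<0$), we obtain
\[
\langle u, v_i\rangle \;=\; \frac{-\langle L, v_i\rangle}{|L|} \;\ge\; \frac{1-(2\delta+\epsilon)}{1+2\delta} \;=:\; \alpha,
\]
with $\alpha\in(0,1]$ thanks to the hypothesis $2\delta+\epsilon<1$.

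Finally, letting $\theta_i$ denote the angle between $u$ and $v_i$, the inequality above says $\cos\theta_i\ge\alpha$, i.e., $\theta_i\le\cos^{-1}\alpha$. The triangle inequality for angles in the plane spanned by $v_1,v_2,u$ then gives $\theta\le \theta_1+\theta_2\le 2\cos^{-1}\alpha$, and the double-angle formula yields
\[
\cos\theta \;\ge\; \cos(2\cos^{-1}\alpha) \;=\; 2\alpha^{2}-1 \;=\; 2\left(\frac{1-(2\delta+\epsilon)}{1+2\delta}\right)^{2}-1,
\]
which is the desired bound. The only subtle point is the sign/direction bookkeeping in step one (choosing to probe $\lambda$ along the specific segment toward $z_i$, where the upper envelope $d(x,z_i)-t$ is sharp); the rest is elementary linear algebra and trigonometry.
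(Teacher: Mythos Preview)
Your argument is correct and follows essentially the same route as the paper: both proofs use the affine approximation $\lambda(p)=\lambda(x)+\langle L,p-x\rangle$, derive the inequality $-\langle L,v_i\rangle\ge 1-(2\delta+\epsilon)$ from the behavior of $f$ along $[x,z_i]$, and bound $|L|\le 1+2\delta$ from the $1$-Lipschitz property. The only cosmetic difference is the final trigonometric step: the paper applies the norm bound directly to the vector $\tfrac{1}{2}(w_1+w_2)$ (with $w_i=-rv_i$) and computes $\bigl\|\tfrac{1}{2}(w_1+w_2)\bigr\|=r\sqrt{(1+\cos\theta)/2}$, whereas you normalize $L$ to a unit vector $u$, bound each $\cos\theta_i=\langle u,v_i\rangle\ge\alpha$, and invoke the spherical triangle inequality plus the double-angle formula --- both routes yield the identical bound $\cos\theta\ge 2\alpha^2-1$. (One tiny remark: your phrase ``in the plane spanned by $v_1,v_2,u$'' is unnecessary, since the angle triangle inequality holds on the unit sphere in any dimension.)
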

This theorem says that if $f$ is $\delta$-coarsely differentiable on some ball $B(x,r)$, then the set of ``almost-closest'' points to $x$ in $K$ must occupy a small angle as seen from $x$.

To prove this, we first need a few lemmas. 
\begin{lemma}\label{f(x)=d(x,K)}
Let $K\subseteq\mathbb{R}^k$. Let $f:\mathbb{R}^k\rightarrow \mathbb{R}$ be $f(x)=d(x,K)$. Then $f$ is 1-Lipschitz.
\end{lemma}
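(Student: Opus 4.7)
The plan is the standard triangle-inequality argument for distance functions. Fix arbitrary $x,y\in\mathbb{R}^k$; I want to show $|f(x)-f(y)|\leq d(x,y)$. The key observation is that for any $z\in K$, the triangle inequality gives $d(x,z)\leq d(x,y)+d(y,z)$, and hence $f(x)=\inf_{z\in K}d(x,z)\leq d(x,y)+d(y,z)$ for every $z\in K$.

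From here, the natural step is to take the infimum over $z\in K$ on the right-hand side. Since $K$ need not be assumed closed in this lemma (the statement only says $K\subseteq\mathbb{R}^k$), I would argue via an $\varepsilon$-minimizing sequence: for any $\eta>0$, pick $z_\eta\in K$ with $d(y,z_\eta)\leq f(y)+\eta$, obtaining $f(x)\leq d(x,y)+f(y)+\eta$, and then let $\eta\to 0$ to get $f(x)-f(y)\leq d(x,y)$. Swapping the roles of $x$ and $y$ yields the other inequality, and combining them gives $|f(x)-f(y)|\leq d(x,y)$, i.e., $f$ is $1$-Lipschitz.

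There is really no obstacle here; the only thing to be slightly careful about is not assuming $K$ is closed, which is why the proof passes through an $\eta$-approximate infimum rather than an attained minimum. The whole argument is two or three lines and is the classical fact that a distance function to any nonempty set in a metric space is $1$-Lipschitz.
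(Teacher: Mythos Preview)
Your argument is correct and is exactly the standard triangle-inequality proof the paper alludes to; the paper itself simply omits the details, calling it ``a well-known consequence of the triangle inequality.'' There is nothing to add.
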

\begin{proof}
This is a well-known consequence of the triangle inequality, so we omit the simple proof.
\end{proof}

\begin{lemma} \label{ extended f(p)-f(y)}
Let $K\subseteq\mathbb{R}^k$ and $f:\mathbb{R}^k\rightarrow \mathbb{R}$ be $f(x)=d(x,K)$. Suppose that $x\in \mathbb{R}^k$, $z\in K$, $\epsilon\geq 0$, and $r>0$ satisfy
$$d(x,z)\leq d(x,K)+\epsilon r.$$
Let $y\in [x,z]$. Then 
$$f(x)-f(y)\geq d(x,y)-\epsilon r.$$
\end{lemma}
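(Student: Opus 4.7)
The plan is to exploit the collinearity of $x$, $y$, $z$ together with the triangle inequality (equality, in fact) to bound $d(y,z)$, and then use $z\in K$ to bound $f(y)$ from above.

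First I would note the elementary identity
$$d(x,z) = d(x,y) + d(y,z),$$
which holds because $y$ lies on the segment $[x,z]$. Rearranging and plugging in the hypothesis $d(x,z)\leq d(x,K)+\epsilon r = f(x)+\epsilon r$, one gets an upper bound
$$d(y,z) \leq f(x) + \epsilon r - d(x,y).$$

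Next I would use that $z\in K$ to obtain $f(y) = d(y,K) \leq d(y,z)$. Combining this with the previous display yields
$$f(y) \leq f(x) + \epsilon r - d(x,y),$$
which rearranges to exactly the claimed inequality $f(x)-f(y)\geq d(x,y)-\epsilon r$.

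There is no real obstacle here; the only subtle point is recognizing that the hypothesis ``$z$ almost minimizes the distance from $x$ to $K$'' transfers to a bound on $f(y)$ precisely because $y$ is on the segment to $z$ (so the triangle inequality is saturated) and $z$ is available as a test point for $d(y,K)$. In the proof itself I would write out the three short lines above without further comment.
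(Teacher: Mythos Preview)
Your proof is correct and follows essentially the same route as the paper: use collinearity to write $d(y,z)=d(x,z)-d(x,y)$, apply the hypothesis to bound $d(x,z)$, then use $z\in K$ to get $f(y)\leq d(y,z)$ and rearrange. The paper presents these steps in the same order and with the same justification.
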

\begin{proof}
It suffices to show that
$$f(y)\leq f(x)+\epsilon r -d(x,y).$$
As $x,y,z\in [x,z]$,
$$d(x,z)=d(x,y)+d(y,z),$$
and so
$$d(y,z)=d(x,z)-d(x,y).$$
Thus, 
\begin{align*}
    f(y)&=d(y,K)
    \\&\leq d(y,z)
    \\&=d(x,z)-d(x,y)
    \\&\leq d(x,K)+\epsilon r -d(x,y)
    \\&= f(x)+\epsilon r -d(x,y).
\end{align*}
\end{proof}
\begin{lemma}\label{1+2e}
%Let the assumptions from Theorem \ref{ z_1,z_2 close together e} hold and let $A:\mathbb{R}^k\rightarrow \mathbb{R}$ be an affine function 
%\begin{equation}\label{A(t)=L(t)+C)}
%A(t)=L(t)+C
%\end{equation}
%such that \begin{equation}\label{f(t)-A(t)}
%|f(t)-A(t)|\leq \delta  r \text{ for all } t\in B(x,r) 
%\end{equation} 
%(which exists as $f$ is $\delta$-coarsely differentiable on $B(x,r))$.
%Note $L:\mathbb{R}^k\rightarrow \mathbb{R}$ is linear and $C\in \mathbb{R}$ is a constant. 

Let $f\colon\RR^k\rightarrow\RR$ be $1$-Lipschitz. Assume that there is a ball $B(x,r)$ and an affine function $A\colon \RR^k\rightarrow\RR$ such that
\begin{equation}\label{f(t)-A(t)}
|f(t)-A(t)|\leq \delta  r \text{ for all } t\in B(x,r). 
\end{equation}
Write 
\begin{equation}\label{A(t)=L(t)+C)}
A(t)=L(t)+C,
\end{equation}
where $L$ is linear and $C\in\RR$.

Then for all vectors $v\in \mathbb{R}^k$, 
$$|L(v)|\leq (1+2\delta)||v||.$$
\end{lemma}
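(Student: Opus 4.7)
The plan is to chain together three estimates at two cleverly chosen points of $B(x,r)$, then scale. By linearity of $L$ it suffices to establish the bound for a single nonzero vector of any convenient norm, and the natural choice is a vector of norm exactly $r$ (so that both endpoints involved stay inside the ball).

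First, I would fix an arbitrary nonzero $v \in \mathbb{R}^k$ and set $w = (r/\|v\|)\,v$, so that $\|w\| = r$ and both $x$ and $x + w$ lie in $B(x,r)$. Then the difference $A(x+w) - A(x)$ collapses, because of the decomposition \eqref{A(t)=L(t)+C)}, to $L(w)$: the constant $C$ cancels and $L$ is linear. So the quantity I need to bound is $|L(w)| = |A(x+w) - A(x)|$.

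Next, I would insert $f(x+w)$ and $f(x)$ and apply the triangle inequality:
\begin{equation*}
|L(w)| \;\le\; |A(x+w) - f(x+w)| \;+\; |f(x+w) - f(x)| \;+\; |f(x) - A(x)|.
\end{equation*}
The first and third terms are each at most $\delta r$ by the hypothesis \eqref{f(t)-A(t)}, and the middle term is at most $\|w\| = r$ because $f$ is $1$-Lipschitz. Summing gives $|L(w)| \le (1 + 2\delta)\, r = (1 + 2\delta)\,\|w\|$.

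Finally, I would transfer this estimate back to the original $v$ using linearity: since $w = (r/\|v\|)\,v$, we have $L(v) = (\|v\|/r)\, L(w)$, and therefore $|L(v)| \le (\|v\|/r)(1+2\delta) r = (1+2\delta)\|v\|$. The case $v = 0$ is trivial. There is no real obstacle here; the only point requiring any care is choosing the test vector so that both points $x$ and $x+w$ lie in $B(x,r)$, allowing the hypothesis \eqref{f(t)-A(t)} to be applied at both.
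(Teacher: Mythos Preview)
Your proof is correct and follows essentially the same approach as the paper: reduce by linearity to a vector of a convenient norm, then bound $|A(x+w)-A(x)|$ by splitting through $f(x+w)$ and $f(x)$ via the triangle inequality, using the $1$-Lipschitz bound on the middle term and the $\delta r$ approximation on the outer two. The only cosmetic difference is that the paper normalizes to a unit vector $v$ and tests at $x+rv$, whereas you normalize directly to $\|w\|=r$; the computations are identical.
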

It suffices to prove this for all unit vectors $v\in \mathbb{R}^k$. To see why, assume the statement holds for all unit vectors and let $v\in \mathbb{R}^k$. Then as $L$ is linear
$$\frac{|L(v)|}{||v||}=L\left(\frac{v}{||v||}\right)\leq (1+2\delta). $$
Thus 
$$|L(v)|\leq (1+2\delta)||v||.$$
\begin{proof}
Let $v\in \mathbb{R}^k$ be a unit vector.
Thus 
\begin{align*}
|A(x)-A(x+rv)|&=|A(x)-f(x)|+|f(x)-f(x+rv)|+|f(x+rv)-A(x+rv)|
\\& \leq \delta r + r+\delta r 
\\&=r(1+2\delta ).
\end{align*}
But as $L$ is linear 
\begin{align*}
|A(x)-A(x+rv)|&=|L(x)+C-(L(x+rv)+C)|
\\&=|L(x)-L(x+rv)|
\\&=|L(rv)|
\\&=r|L(v)|.
\end{align*}
Thus 
$$r|L(v)|\leq r(1+2\delta)$$
so 
$$|L(v)|\leq 1+2\delta. $$
\end{proof}

\begin{proof}[Proof of Theorem  \ref{ z_1,z_2 close together e}]
Let 
$$D=\{p\in \mathbb{R}^k:d(p,x)=r\}.$$
Let $y_1\in D\cap [x,z_1]$ and $y_2\in D\cap [x,z_2]. $ Note that these points exist because
$$ d(x, z_i) \geq d(x,K) > r$$
by assumption.

%Let $v_1$ be the vector from $x$ to $y_1$. Let $v_2$ be the vector from $x$ to $y_2$. Let $w_1=-v_1$ and $w_2=-v_2$. 
Denote the vectors from $y_i$ to $x$ by
$$w_1=x-y_1\text{ and } w_2=x-y_2.$$

We wish  to find bounds  for $$ \left |L\left(\frac{w_1+w_2}{2}\right)\right|\text{ and } \left|\left|\frac{w_1+w_2}{2}\right|\right|.$$ 
%Note that, as we can think of $x,y_1,y_2$ as vectors starting at $0$, we can write 
%$$w_1=x-y_1$$
%$$w_2=x-y_2.$$
%However, even though $f$ is 1-Lipschitz by Lemma \ref{f(x)=d(x,K)},  we need to adjust for the error in \eqref{f(t)-A(t)} to say anything about $A$. We took care of that above in Lemma \ref{1+2e}.

By \eqref{d(x,z_1)< d(x,K)+er} and as $y_1\in [x,z_1]$, we can say by Lemma \ref{ extended f(p)-f(y)} that
\begin{align*}
    f(x)-f(y_1)&\geq d(x,y_1)-\epsilon r \\&
    =r-\epsilon r\\&=r(1-\epsilon )\\&>0.
\end{align*}
Thus 
\begin{equation}\label{ub bound f(y_1) e}
    f(y_1)\leq f(x)-r(1-\epsilon).
\end{equation}
Similarly, 
$$f(y_2)\leq f(x)-r(1-\epsilon).$$
As $f$ is 1-Lipschitz and by our work from above,
\begin{align*}
   f(x)-f(y_1)&= |f(x)-f(y_1)|
    \\& \leq d(x,y_1)
    \\& =r.
\end{align*}
So
\begin{equation}\label {lb f(y_1) e}
    f(y_1)\geq f(x)-r.
\end{equation}
Similarly,
$$f(y_2)\geq f(x)-r.$$ 
Let $A$ and $L$ be the same functions from Lemma \ref{1+2e}.
By  \eqref{f(t)-A(t)},
$$f(y_1)-\delta r\leq A(y_1)\leq f(y_1)+\delta r.$$
By \eqref{ub bound f(y_1) e}, and \eqref{lb f(y_1) e},
$$f(x)-r-\delta r\leq A(y_1)\leq f(x)-r(1-\epsilon)+\delta r.$$
By \eqref{A(t)=L(t)+C)},
\begin{equation}\label{u/l bound for L(y_1)}
f(x)-r-\delta r -C\leq L(y_1)\leq f(x)-r+r(\delta +\epsilon)-C.
\end{equation}
Similarly 
$$f(x)-r-\delta r -C\leq L(y_2)\leq f(x)-r+r(\delta+\epsilon)-C.$$
By \eqref{f(t)-A(t)} and \eqref{A(t)=L(t)+C)}
$$-\delta r\leq f(x)-(L(x)+C)\leq \delta r.$$
Thus 
\begin{equation}\label{bound for L(x)}
f(x)-C -\delta r \leq L(x)\leq f(x)-C+\delta r.
\end{equation}

Thus as $L$ is linear and by \eqref{u/l bound for L(y_1)}, \eqref{bound for L(x)}
\begin{align*}
     L(w_1)&=L(x-y_1)
     \\&=L(x)-L(y_1)
     \\&\geq f(x)-C-\delta r -(f(x)-r+r(\epsilon+\delta)-C)
     \\& =-2\delta r -\epsilon r +r
     \\&= r(1-(2\delta +\epsilon)).
 \end{align*}
Similarly 
$$L(w_2)\geq r(1-(2\delta+\epsilon)).$$
Thus as $L$ is linear
\begin{equation}\label{lb for (w_1+w_2)/2 e}
\left|L\left(\frac{w_1+w_2}{2}\right)\right|\geq L\left(\frac{w_1+w_2}{2}\right)=\frac{L(w_1)+L(w_2)}{2}\geq r(1-(2\delta+\epsilon)).
\end{equation}
Now, to obtain a bound for $\left|\left|\frac{w_1+w_2}{2}\right|\right|^2$, note first that $||w_1||=||w_2||=r$. Thus,
\begin{align*}
    \left|\left|\frac{w_1+w_2}{2}\right|\right|^2&=\frac{1}{4}\left|\left|w_1+w_2\right|\right|^2
    \\&=\frac{1}{4}((w_1+w_2)\cdot (w_1+w_2))
    \\&=\frac{1}{4}(||w_1||^2+2(w_1\cdot w_2)+||w_2||^2)
    \\&=\frac{1}{4}(r^2+2(w_1\cdot w_2)+r^2)
    \\&=\frac{r^2}{2}+\frac{1}{2}(w_1\cdot w_2)
    \\&=\frac{r^2}{2}+\frac{1}{2}||w_1|| ||w_2||\cos(\theta)
    \\&=\frac{r^2}{2}+\frac{r^2}{2}\cos(\theta)
    \\&=r^2\left(\frac{1+\cos(\theta)}{2}\right).
\end{align*}
Thus,
\begin{equation}\label{length of (w_1+w_2)/2 e}
\left|\left|\frac{w_1+w_2}{2}\right|\right|=r\sqrt{\frac{1+\cos(\theta)}{2}}.
\end{equation}
Thus, by \eqref{lb for (w_1+w_2)/2 e}, \eqref{length of (w_1+w_2)/2 e} and Lemma \ref{1+2e},
\begin{align*}
r(1-(2\delta+\epsilon))&\leq \left|L\left(\frac{w_1+w_2}{2}\right)\right|
\\&\leq (1+2\delta)\left|\left|\frac{w_1+w_2}{2}\right|\right|
\\&\leq r(1+2\delta)\sqrt{\frac{1+\cos(\theta)}{2}}.
\end{align*}
Thus,
$$(1-(2\delta+\epsilon))\leq (1+2\delta)\sqrt{\frac{1+\cos(\theta)}{2}}.$$
Now solving for $\theta$
$$\left(2\left(\frac{1-(2\delta+\epsilon)}{1+2\delta}\right)^2-1\right)\leq 
\cos(\theta),$$
i.e.,
$$|\theta|\leq \cos^{-1}\left(2\left(\frac{1-(2\delta +\epsilon)}{1+2\delta}\right)^2-1\right).$$
\end{proof}

Now Theorem \ref{thm:main} follows directly from Theorem \ref{final qd intro} and Theorem \ref{ z_1,z_2 close together e}. 
\begin{proof}[Proof of Theorem \ref{thm:main}]
Let $f:\mathbb{R}^k\rightarrow \mathbb{R}$ be $f(x)=d(x,K)$. 

The measurability of the set $G\subseteq \RR^k\times\RR^+$ in Theorem \ref{thm:main} follows by standard arguments. Briefly, given $t>0$, let
\begin{align*}
G_t=&\{(x,r):x\in \mathbb{R}^k, 0<r<d(x,K),\text{ and there exist }z_1,z_2\in K \text{ such that }\\
&d(x,z_1),d(x,z_2)< d(x,K)+\epsilon r+t\\& \text{ but }|\theta|>\cos^{-1}\left(2\left(\frac{1-(2\delta +\epsilon)}{1+2\delta}\right)^2-1\right)\},
\end{align*}
where $\theta$ again denotes the angle of $[x,z_1]$ and $[x,z_2]$. Then each $G_t$ is open in $\RR^k\times\RR^+$ and
$$ G = \bigcap_{n=1}^\infty G_{1/n},$$
so is therefore measurable.

To see the Carleson condition, it follows from Theorem \ref{ z_1,z_2 close together e} that if $(x,r)\in G$ then $f$ is not $\delta$ -coarsely differentiable on $(x,r)$. Let $H$ denote the collection of $(x,r)$ such that $f$ is not $\delta$-coarsely differentiable on $B(x,r)$. 
Then $G\subseteq H$. By Lemma \ref{f(x)=d(x,K)}, $f$ is 1-Lipschitz. By Theorem \ref{final qd intro}, $H$ is Carleson, with Carleson constant bounded above depending only on $\delta$. Thus, $G$ is Carleson with Carleson constant bounded above by that of $H$.
\end{proof}
\printbibliography

\end{document}